\documentclass[12pt]{article}

\usepackage{amsmath,amsthm,amsfonts,amssymb,amscd,caption,color,subcaption,cite}
\usepackage{graphicx}
\usepackage[figurename=Fig.]{caption}
\numberwithin{equation}{section}
\allowdisplaybreaks[4]

\newtheorem {Lemma}{Lemma}[section]
\newtheorem {Theorem} {Theorem}[section]

\usepackage{fullpage}

\begin{document}

\title{Spectral conditions for factor-criticality of graphs}

\author{Jin Cai\footnote{E-mail: jincai@m.scnu.edu.cn}, Bo Zhou\footnote{E-mail: zhoubo@m.scnu.edu.cn}\\
School of  Mathematical Sciences, South China Normal University,\\
Guangzhou 510631, P.R. China}

\date{}
\maketitle

\begin{abstract}
A graph $G$ is $k$-factor-critical if $G-S$ has a perfect matching for any
$k$-subset $S$ of the vertex set of $G$. In this paper, we investigate the factor-criticality of graphs with fixed minimum degree and provide sufficient conditions for such graphs
to be $k$-factor-critical in terms of spectral radius and signless Laplacian spectral radius.  \\ \\
%{\bf Mathematics Subject Classifications:} 05C50\\ \\
{\bf Keywords:}  factor-critical graph, minimum degree, spectral radius,  signless Laplacian spectral radius
\end{abstract}

\section{Introduction}

Graphs considered in this paper are simple and undirected. Let $G$ be a graph  with vertex set $V(G)$ and edge set $E(G)$.  For any $v\in V(G)$, the neighborhood  $N_G(v)$ of $v$ is the set of vertices adjacent to $v$ in $G$, and the degree of $v$, denoted by $d_G(v)$, is the number $|N_G(v)|$.
For any $S\subset V(G)$, let $G-S$ be the subgraph of $G$ obtained by removing the vertices in $S$ (and incident edges).
For two vertex disjoint graphs $G_1$ and $G_2$,  $G_1\cup G_2$ denotes the disjoint union of $G_1$ and $G_2$, $G_1\vee G_2$ denotes the join of  $G_1$ and $G_2$, which is obtained from $G_1\cup G_2$
by adding all possible edges between any vertex of $G_1$ and any vertex of $G_2$. For positive integer $k$ and a graph $G$, $kG$ denotes the graphs consisting of $k$ vertex disjoint copies of $G$.
Denote by $K_n$ the complete graph of order $n$.

A perfect matching in a graph  is a set $M$ of
edges such that no two edges in $M$ are adjacent and every vertex is incident to an edge that belongs to $M$.  A graph $G$ is $k$-factor-critical \cite{Fa,Yu}
if $G-S$ has a perfect matching for any $k$-subset $S$
of $V(G)$. In particular, a $0$-factor-critical graph  is a graph  with a perfect matching,
a $1$-factor-critical graph is known as a factor-critical  graph, and a
$2$-factor-critical graph is known as a factor-bicritical  graph, see \cite{DR,Ga,Lo}.
By definition, if a graph of order $n$ is $k$-factor-critical, then
$k$ and $n$ have the same parity  and $0 \leq k\leq n-2$.

It has been a subject of interest to find sufficient conditions for a $k$-factor-critical graph, see, e.g. \cite{ACA,EPS}. Recently, Zhou et al. \cite{ZSZ} gave several spectral radius conditions for the existence of $(k+1)$-connected
$k$-factor-critical graphs.

For positive integers $n$ and $\delta$ with $\delta\le n-1$, let $\mathbb{G}(n,\delta)$ be the class of graphs of order $n$ with minimum degree $\delta$.  Let $G\in \mathbb{G}(n,\delta)$.
Let $u$ be a vertex of $G$ with minimum degree $\delta$. Evidently, $u$ is an isolated vertex in $G-N_G(u)$, so
$G-N_G(u)$ has no perfect matching. This shows that $G$ can not be a   $\delta$-factor-critical graph.

In this paper,  we provide sufficient conditions for graphs in $\mathbb{G}(n,\delta)$ to be
$k$-factor-critical when $k<\delta$ using spectral radius and signless Laplacian spectral radius, respectively. For a graph $G$, we denote by $\rho(G)$ and $q(G)$ the spectral radius and the signless Laplacian spectral radius of $G$, respectively. The main results are as follows.

\begin{Theorem}\label{Cai1}
Let $G\in \mathbb{G}(n,\delta)$, where $n\geq 4\delta+3$. Let $k$ be an integer in $[0,\delta)$ with $k\equiv n \pmod 2$. If $\rho(G)\geq\rho(K_\delta\vee ((\delta-k+1)K_1\cup K_{n-2\delta+k-1}))$, then $G$ is $k$-factor-critical unless $G\cong K_\delta\vee((\delta-k+1)K_1\cup K_{n-2\delta+k-1})$, where
$\rho(K_\delta\vee ((\delta-k+1)K_1\cup K_{n-2\delta+k-1}))$ is equal to the largest root of $f(x)=0$ with
\begin{align*}
f(x)& =x^3-(n+k-\delta-3)x^2-(n+\delta^2-k\delta+k-2)x\\
&\quad -2\delta^3+(n+3k-4)\delta^2+(n+3k-nk-k^2-2)\delta.
\end{align*}
\end{Theorem}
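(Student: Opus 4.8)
The plan is to argue by contradiction, combining a Tutte-type structural characterization of factor-criticality with a spectral maximization over the resulting family of extremal graphs.

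First I would invoke the combinatorial characterization of $k$-factor-critical graphs (Favaron/Yu): since $n\equiv k\pmod 2$, the graph $G$ fails to be $k$-factor-critical exactly when there is a set $S\subseteq V(G)$ with $|S|=s\ge k$ and $o(G-S)\ge s-k+2$, where $o(\cdot)$ counts odd components; the gap is at least $2$ because $o(G-S)$ and $s-k$ share the parity of $n-k$. So I assume such a ``bad set'' $S$ exists and aim to force $G\cong G^{*}$, where $G^{*}:=K_\delta\vee((\delta-k+1)K_1\cup K_{n-2\delta+k-1})$.

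Next I would pass to an edge-maximal model. Adding all edges inside $S$, all edges between $S$ and $V(G)\setminus S$, and all edges inside each component of $G-S$ does not change $o(G-S)$ and does not decrease $\rho$, so I may replace $G$ by a graph of the form $K_s\vee H_0$, where $H_0$ is a disjoint union of cliques with at least $q:=s-k+2$ odd components. A clique-concentration (Kelmans-type) transformation lemma then shows that, for fixed $n$ and $s$, the spectral radius of $K_s\vee H_0$ is maximized when $H_0=(q-1)K_1\cup K_{n-2s+k-1}$; call this graph $\widetilde H(s)$, so that $\widetilde H(\delta)=G^{*}$. Here I would also use the minimum-degree hypothesis: a singleton odd component has degree $s$ in $G$, which forces $s\ge\delta$, while the requirement that the large clique be nonempty gives $s\le\tfrac{n+k}{2}-1$; the remaining low-range possibilities (all odd components of size $\ge2$, hence no singletons) I would dispatch separately by showing their largest clique is too small to support $\rho(G)\ge\rho(G^{*})\ge\rho(K_{n-\delta+k-1})=n-\delta+k-2$, the last inequality holding since $K_{n-\delta+k-1}\subseteq G^{*}$.

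It then remains to prove that $s\mapsto\rho(\widetilde H(s))$ is strictly decreasing on $\delta\le s\le\tfrac{n+k}{2}-1$, so that its maximum $\rho(\widetilde H(\delta))=\rho(G^{*})$ is attained only at $s=\delta$. Each $\widetilde H(s)$ has an equitable $3$-partition into $V(K_s)$, the $(q-1)$ isolated vertices, and the large clique; the Perron value equals the largest eigenvalue of the associated $3\times3$ quotient matrix, and at $s=\delta$ its characteristic polynomial is exactly the stated $f(x)$ (its trace $n-\delta+k-3$ matches the $x^2$-coefficient $-(n+k-\delta-3)$). I expect this monotonicity to be the main obstacle: it requires evaluating the quotient characteristic polynomial of $\widetilde H(s)$ at the largest root $\rho^{*}$ of $f$ and showing it keeps a fixed sign for $s>\delta$, which is exactly where the hypothesis $n\ge 4\delta+3$ is needed to dominate the lower-order terms. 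Granting this, $\rho(G)\le\rho(\widetilde H(s))\le\rho(G^{*})$ together with the hypothesis $\rho(G)\ge\rho(G^{*})$ forces equality throughout, hence $s=\delta$ and $G\cong G^{*}$, completing the proof.
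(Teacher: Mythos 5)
Your overall architecture matches the paper's: Favaron's characterization produces a bad set $S$ with $o(G-S)\ge s-k+2$, you pass to the join-of-cliques supergraph $K_s\vee(K_{n_1}\cup\cdots\cup K_{n_t})$, concentrate the cliques, and compare quotient-matrix characteristic polynomials $f_s$ versus $f_\delta$ at the largest root of $f_\delta$ for $s\ge\delta+1$ (this last step is exactly the paper's Lemma~\ref{h1}, and your use of $n\ge 4\delta+3$ there is where the paper uses it too). The equality analysis and the identification of $f$ via the equitable $3$-partition are also as in the paper.

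The genuine gap is your treatment of the range $k\le s<\delta$. Two problems. First, the minimum-degree hypothesis does not ``force $s\ge\delta$'' via singleton odd components: the singletons are created by your clique-concentration step, whereas $\delta(G)=\delta$ constrains only the original graph; what it actually gives is that every odd component of $G-S$ has order at least $\delta+1-s$. Second, and more seriously, these low-range cases cannot be dispatched by observing that the largest clique of the reduced graph is too small to support $\rho\ge n-\delta+k-2$. The reduced graph $G'=K_s\vee(K_{n_1}\cup\cdots\cup K_{n_t})$ is connected and properly contains its largest clique, so $\rho(G')$ strictly exceeds that clique's spectral radius; in fact for $s=k\ge1$ the extremal configuration is $K_k\vee(K_{\delta-k+1}\cup K_{n-\delta-1})$, which properly contains $K_{n-\delta+k-1}$ and therefore satisfies $\rho(G')>n-\delta+k-2$, so the inequality you want to prove is simply false there. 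What is actually needed is the finer comparison $\rho\bigl(K_s\vee((s-k+1)K_{\delta-s+1}\cup K_{n-s-(s-k+1)(\delta-s+1)})\bigr)<\rho(H(n,\delta,k))$, which the paper proves in Lemma~\ref{h2} by an edge-shifting Rayleigh-quotient argument (moving the edges inside one small clique onto join edges toward the big clique and then invoking Lemmas~\ref{SP} and~\ref{GE}). Without an argument of this kind your proof does not close in the case $s<\delta$.
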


\begin{Theorem}\label{Cai2}
Let $G\in \mathbb{G}(n,\delta)$, where $n\geq 9\delta-2k+12$. Let $k$ be an integer in $[0,\delta)$ with $k\equiv n \pmod 2$. If $q(G)\geq q(K_\delta\vee((\delta-k+1)K_1\cup K_{n-2\delta+k-1}))$, then $G$ is $k$-factor-critical unless $G\cong K_\delta\vee (\delta-k+1)K_1\cup K_{n-2\delta+k-1})$, where
$q(K_\delta\vee((\delta-k+1)K_1\cup K_{n-2\delta+k-1}))$ is equal to the largest root of $g(x)=0$ with
\begin{align*}
g(x)&=x^3-(3n-\delta+2k-6)x^2\\
&\quad +(2n^2+(\delta+2k-8)n-4\delta^2+4(k-1)\delta-4k+8)x\\
&\quad -2\delta^3+(4n+4k-10)\delta^2-(2n^2+(4k-10)n+2k^2-10k+12)\delta.
\end{align*}
\end{Theorem}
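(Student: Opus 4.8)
The plan is to argue by contradiction through the classical Tutte-type characterization of factor-critical graphs due to Favaron and Yu \cite{Fa,Yu}: a graph $G$ of order $n$ with $n\equiv k\pmod 2$ is $k$-factor-critical if and only if $o(G-S)\le |S|-k$ for every $S\subseteq V(G)$ with $|S|\ge k$, where $o(G-S)$ denotes the number of odd components of $G-S$. Assume $G\in\mathbb{G}(n,\delta)$ is not $k$-factor-critical. Then there is a set $S$ with $|S|=s\ge k$ and $o(G-S)\ge s-k+2$, the gap being at least $2$ because $o(G-S)\equiv n-s\equiv s-k\pmod 2$. I would first record the degree constraint: if $C$ is any component of $G-S$ then every vertex of $C$ sends all its edges inside $C\cup S$, so $|V(C)|\ge \delta-s+1$; in particular singleton components can occur only when $s\ge\delta$.

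Next I would pass to an edge-maximal model. Since the signless Laplacian spectral radius is strictly monotone under the addition of an edge (the difference $Q(G+e)-Q(G)$ is positive semidefinite, and the relevant graphs are connected), adding edges only increases $q$ while preserving the violation witnessed by $S$. Hence $G$ is a spanning subgraph of $K_s\vee(K_{n_1}\cup\cdots\cup K_{n_t})$, where $\sum_i n_i=n-s$, each $n_i\ge\delta-s+1$, and at least $s-k+2$ of the $n_i$ are odd. A compression argument—merging an even component into the largest clique, and transferring vertices from a smaller odd clique to the largest one while keeping exactly $s-k+2$ odd components—increases $q$ at each step, so $q(G)\le q(H(n,s))$, where
\[
H(n,s)=K_s\vee\bigl((s-k+1)K_1\cup K_{n-2s+k-1}\bigr)
\]
when $s\ge\delta$ (the $s-k+1$ smallest odd components become singletons), and the analogous graph with each small odd clique of order about $\delta-s+1$ when $s<\delta$. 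Note that $n-2s+k-1$ is odd by the parity hypothesis, so $H(n,s)$ indeed has $s-k+2$ odd components, and $H(n,\delta)$ is precisely the conjectured extremal graph.

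I would then compute $q(H(n,s))$ from the equitable partition of its vertex set into the clique $K_s$, the set of singletons, and the large clique. Because $H(n,s)$ is connected and this partition is equitable, $q(H(n,s))$ equals the largest eigenvalue of the $3\times 3$ quotient matrix of the signless Laplacian; evaluating its characteristic polynomial at $s=\delta$ reproduces the cubic $g(x)$ in the statement, identifying $q(H(n,\delta))$ as the largest root of $g(x)=0$. It then remains to show that $s=\delta$ strictly dominates, that is, $q(H(n,s))<q(H(n,\delta))$ for every admissible $s>\delta$, and that the spread-out graphs arising when $s<\delta$ have strictly smaller $q$. This is where the hypothesis $n\ge 9\delta-2k+12$ enters.

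The hard part will be this final comparison. Writing $g_s(x)$ for the characteristic polynomial of the signless-Laplacian quotient matrix of $H(n,s)$—so that $g=g_\delta$ and $q(H(n,s))$ is its largest root—I would prove $q(H(n,s))<q(H(n,\delta))$ for $s>\delta$ by showing $g_s\bigl(q(H(n,\delta))\bigr)>0$: since $g_s$ is monic and increasing to the right of its largest root, this forces that root below $q(H(n,\delta))$. The point is that $n\ge 9\delta-2k+12$ makes the difference $g_s(x)-g_\delta(x)$ sign-definite on the interval containing these roots, which I would localize using elementary bounds such as $q(H(n,\delta))\ge \Delta+1=n$ for the maximum degree $\Delta=n-1$. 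The case $s<\delta$ is dispatched more cheaply, since breaking the complement of $S$ into at least $s-k+2$ cliques each of order at least $\delta-s+1$ keeps $q$ well below the largest root of $g$. Carrying out these polynomial estimates cleanly, rather than by brute force, is the principal technical obstacle; finally, equality throughout forces $s=\delta$ and renders every compression step vacuous, whence $G\cong K_\delta\vee((\delta-k+1)K_1\cup K_{n-2\delta+k-1})$.
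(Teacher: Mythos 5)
Your plan retraces the paper's own proof: Favaron's characterization (Lemma~\ref{fc}), embedding $G$ in $K_s\vee(K_{n_1}\cup\cdots\cup K_{n_t})$, compressing to $H_s=K_s\vee((s-k+1)K_1\cup K_{n-2s+k-1})$, computing $q(H_s)$ from the equitable quotient matrix, and comparing the largest roots of the cubics $g_s$ and $g_\delta$. The genuine gap sits exactly in the step you call the principal obstacle, and the one concrete tool you offer for it does not work. You propose to localize the largest roots with the bound $q(H(n,\delta,k))\ge\Delta+1=n$ and then show $g_s-g_\delta$ is sign-definite on the resulting ray. Writing $g_s(x)-g_\delta(x)=(s-\delta)h(x)$, a direct computation gives
\[
h(n)=6n-2s^2-2\delta s-2\delta^2+4ks+4k\delta-10s-10\delta-2k^2+10k-12,
\]
which is negative once $s$ is near its admissible maximum $\tfrac{n+k-2}{2}$ (the term $-2s^2\approx-\tfrac{1}{2}n^2$ swamps $6n$; e.g.\ $\delta=1$, $k=0$, $n=22$, $s=10$ gives $h(n)=-212$). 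So the difference is not sign-definite on $[n,\infty)$, and your comparison collapses precisely for large $s$. The paper instead localizes with the much sharper bound $q(H(n,\delta,k))>q(K_{n-\delta+k-1})=2n-2\delta+2k-4$, available because $K_\delta\vee K_{n-2\delta+k-1}=K_{n-\delta+k-1}$ is a subgraph of $H(n,\delta,k)$; it then checks that the vertex of the quadratic $h$ lies to the left of $2n-2\delta+2k-4$ and that $h(2n-2\delta+2k-4)>0$ under $n\ge 9\delta-2k+12$ and $s\le\tfrac{n+k-2}{2}$. Only on this shorter ray does sign-definiteness hold, and your argument needs to be rebuilt on it.

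Two further steps are asserted rather than proved, and neither is routine. First, each compression move (absorbing even components, shifting vertices from small odd cliques into the large one) must \emph{strictly} increase $q$, and the uniqueness clause of the theorem hinges on exactly when these moves are strict; the paper secures this with a Perron-vector ordering (Lemma~\ref{SP}) combined with an edge-shifting lemma (Lemma~\ref{GE}). Second, the case $s<\delta$ is not ``dispatched more cheaply'': there no component of $G-S$ can be a singleton, the extremal configuration is $K_s\vee((s-k+1)K_{\delta-s+1}\cup K_{n-s-(s-k+1)(\delta-s+1)})$, and comparing its $q$ with $q(H(n,\delta,k))$ requires a separate Rayleigh-quotient edge-rearrangement argument (the paper's Lemma~\ref{h2}), not the cubic comparison. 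Finally, your inference ``$g_s$ is monic and increasing to the right of its largest root, so $g_s(q)>0$ forces that root below $q$'' is invalid as stated: positivity of a cubic at a single point does not bound its largest root. What you actually need --- and what sign-definiteness on the correct ray, together with $g_\delta>0$ beyond its largest root, delivers --- is $g_s>0$ on all of $[\,q(H(n,\delta,k)),\infty)$.
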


\section{Preliminaries }

The adjacency matrix of a graph $G$ is defined as $\mathbf{A}(G)=(a_{uv})_{u,v\in V(G)}$, where $a_{uv}=1$ if $u$ and $v$ are adjacent, and $a_{uv}=0$ otherwise. The spectral radius of $G$, $\rho(G)$, is the largest eigenvalue of $\mathbf{A}(G)$.
Let $\mathbf{D}(G)$ be the degree diagonal matrix of $G$. The matrix $\mathbf{Q}(G)=\mathbf{D}(G)+\mathbf{A}(G)$ is the signless Laplacian matrix of $G$. The
signless Laplacian spectral radius of $G$, $q(G)$,
is the largest eigenvalue of $\mathbf{Q}(G)$.

We need some lemmas in the proofs.

By  Perron-Frobenius theorem, we have the following lemma.

\begin{Lemma} \label{inequit} %\cite{BP}
Let $G$ be a graph and $u$ and $v$ two distinct vertices that are not adjacent in $G$. If $G+uv$ is connected, then $\rho(G+uv)>\rho(G)$ and $q(G+uv)>q(G)$.
\end{Lemma}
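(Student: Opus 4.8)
The plan is to deduce both inequalities from a single Perron--Frobenius fact: if $\mathbf{B}$ and $\mathbf{C}$ are nonnegative symmetric matrices with $\mathbf{B}\le\mathbf{C}$ entrywise, $\mathbf{B}\ne\mathbf{C}$, and $\mathbf{C}$ is irreducible, then $\rho(\mathbf{B})<\rho(\mathbf{C})$, where $\rho(\cdot)$ here denotes the largest eigenvalue. Write $H=G+uv$. The whole argument rests on the observation that the hypothesis that $H$ is connected is exactly what delivers the irreducibility needed to upgrade the routine monotonicity $\rho(\mathbf{B})\le\rho(\mathbf{C})$ to a strict inequality. I would treat the adjacency and the signless Laplacian cases in parallel, since they differ only in a short bookkeeping step.

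For the spectral radius I would take $\mathbf{B}=\mathbf{A}(G)$ and $\mathbf{C}=\mathbf{A}(H)$. Adding the edge $uv$ changes exactly the two symmetric entries in positions $(u,v)$ and $(v,u)$ from $0$ to $1$, so $\mathbf{A}(G)\le\mathbf{A}(H)$ entrywise and $\mathbf{A}(G)\ne\mathbf{A}(H)$. Since $H$ is connected, $\mathbf{A}(H)$ is irreducible, and the fact above gives $\rho(G)<\rho(H)$. When $G$ is itself connected, the strictness is transparent via Rayleigh's principle applied to the positive unit Perron vector $\mathbf{y}$ of $G$, since $\rho(H)\ge \mathbf{y}^{\top}\mathbf{A}(H)\mathbf{y}=\mathbf{y}^{\top}\mathbf{A}(G)\mathbf{y}+2y_uy_v=\rho(G)+2y_uy_v>\rho(G)$; in general $G$ may have two components, one containing $u$ and one containing $v$, in which case $\mathbf{y}$ can vanish on one of them and $y_uy_v$ may be $0$, so there I would instead invoke the irreducibility of $\mathbf{A}(H)$ through the stated Perron--Frobenius fact.

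For the signless Laplacian I would set $\mathbf{B}=\mathbf{Q}(G)$ and $\mathbf{C}=\mathbf{Q}(H)$. Passing from $G$ to $H$ raises $d_G(u)$ and $d_G(v)$ each by one and adds $1$ to the $(u,v)$ and $(v,u)$ entries of the adjacency part, so $\mathbf{Q}(G)\le\mathbf{Q}(H)$ entrywise with $\mathbf{Q}(G)\ne\mathbf{Q}(H)$. The off-diagonal nonzero pattern of $\mathbf{Q}(H)=\mathbf{D}(H)+\mathbf{A}(H)$ coincides with that of $\mathbf{A}(H)$, so $\mathbf{Q}(H)$ is irreducible precisely because $H$ is connected; applying the same fact yields $q(G)<q(H)$.

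The only genuinely delicate point is the strictness, not the inequality itself: nonstrict monotonicity of the largest eigenvalue under an entrywise increase of a nonnegative symmetric matrix is immediate from the variational characterization, whereas the strict version can fail without connectivity, as adding an edge inside a component other than one attaining the spectral radius of a disconnected graph leaves $\rho$ unchanged. Thus the hypothesis that $G+uv$ is connected is used in exactly one place, to guarantee that $\mathbf{A}(H)$ and $\mathbf{Q}(H)$ are irreducible so that their Perron vectors are everywhere positive and the gain from the added edge cannot be absorbed.
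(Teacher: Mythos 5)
Your proof is correct and matches the paper's approach: the paper gives no written proof, simply deriving the lemma ``by Perron--Frobenius theorem,'' which is precisely the strict monotonicity fact for nonnegative matrices dominated entrywise by an irreducible matrix that you invoke. Your additional remark on why connectivity of $G+uv$ (rather than of $G$) is the right hypothesis for strictness is accurate and fills in the detail the paper leaves implicit.
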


Let $V(G)=V_1\cup \dots\cup V_s$ be a partition of $V(G)$. For $1\le i<j\le s$, set $\mathbf{B}_{ij}$ denotes  the submatrix of $\mathbf{H}(G)\in \{\mathbf{A}(G), \mathbf{Q}(G)\}$ with rows corresponding to vertices in $V_i$ and columns corresponding to vertices in $V_j$. The quotient matrix of $\mathbf{H}(G)$ with respect to the partition $V_1\cup \dots \cup V_s$ is the matrix $\mathbf{B}=(b_{ij})$, where $b_{ij}$ equals to the average row sums of $\mathbf{B}_{ij}$. If $\mathbf{B}_{ij}$ has constant row sum, then we say $B$ is an equitable quotient matrix (with respect to the above partition of $V(G)$).
The following lemma is an immediate consequence of \cite[Lemma 2.3.1]{BH}

Denote by $\lambda(\mathbf{M})$ the spectral radius of a square nonnegative matrix $\mathbf{M}$, which is an eigenvalue of $\mathbf{M}$.

\begin{Lemma} \label{equit} \cite{BH}
If $\mathbf{B}$ is an equitable quotient matrix of $\mathbf{H}(G)\in \{\mathbf{A}(G), \mathbf{Q}(G)\}$, then  the eigenvalues of $\mathbf{B}$ are also eigenvalues of $\mathbf{H}(G)$, and  $\lambda(\mathbf{H}(G))$ is equal to the largest eigenvalue of $\mathbf{B}$.
\end{Lemma}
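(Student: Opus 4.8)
The plan is to use the characteristic (indicator) matrix of the partition to realize $\mathbf{B}$ as the action of $\mathbf{H}(G)$ on an invariant subspace, and then to exploit nonnegativity to pin down the largest eigenvalue. Write $V(G)=V_1\cup\dots\cup V_s$ and let $\mathbf{P}$ be the $n\times s$ matrix whose $(v,i)$ entry is $1$ if $v\in V_i$ and $0$ otherwise; its columns are the indicator vectors of the (nonempty, pairwise disjoint) parts, hence linearly independent, so $\mathbf{P}$ has full column rank. First I would verify the intertwining identity $\mathbf{H}(G)\mathbf{P}=\mathbf{P}\mathbf{B}$. Indeed, the $(v,i)$ entry of $\mathbf{H}(G)\mathbf{P}$ equals $\sum_{w\in V_i}\mathbf{H}(G)_{vw}$, the sum of the entries of row $v$ of $\mathbf{H}(G)$ over the columns indexed by $V_i$; since the partition is equitable this row sum depends only on the class $c(v)$ of $v$ and equals $b_{c(v),i}$, which is exactly the $(v,i)$ entry of $\mathbf{P}\mathbf{B}$.

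Given the identity, the first assertion is immediate: if $\mathbf{B}x=\mu x$ with $x\neq 0$, then $\mathbf{H}(G)(\mathbf{P}x)=\mathbf{P}\mathbf{B}x=\mu\,\mathbf{P}x$, and $\mathbf{P}x\neq 0$ because $\mathbf{P}$ has full column rank; hence $\mu$ is an eigenvalue of $\mathbf{H}(G)$.

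For the statement about the spectral radius, note that $\mathbf{H}(G)\in\{\mathbf{A}(G),\mathbf{Q}(G)\}$ is a nonnegative matrix, so $\lambda(\mathbf{H}(G))$ is its largest eigenvalue and, by Perron--Frobenius, admits an eigenvector $z\ge 0$ with $z\neq 0$. Similarly $\mathbf{B}$ is nonnegative, so its largest eigenvalue is its spectral radius $\lambda(\mathbf{B})$; since $\mathbf{B}$ and $\mathbf{B}^{\top}$ share the same spectrum, $\lambda(\mathbf{B})$ is also the largest eigenvalue of $\mathbf{B}^{\top}$. One inequality is free: since every eigenvalue of $\mathbf{B}$ is an eigenvalue of $\mathbf{H}(G)$ by the first part, $\lambda(\mathbf{B})\le\lambda(\mathbf{H}(G))$. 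For the reverse, I would transpose the intertwining identity to get $\mathbf{P}^{\top}\mathbf{H}(G)=\mathbf{B}^{\top}\mathbf{P}^{\top}$ and apply it to $z$:
\[
\mathbf{B}^{\top}(\mathbf{P}^{\top}z)=\mathbf{P}^{\top}\mathbf{H}(G)z=\lambda(\mathbf{H}(G))\,\mathbf{P}^{\top}z .
\]
The vector $\mathbf{P}^{\top}z$ has $i$-th entry $\sum_{v\in V_i}z_v\ge 0$, and it is nonzero because $z\ge 0$ is nonzero and each $V_i$ is nonempty; hence $\lambda(\mathbf{H}(G))$ is an eigenvalue of $\mathbf{B}^{\top}$, so $\lambda(\mathbf{H}(G))\le\lambda(\mathbf{B})$. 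Combining the two inequalities yields $\lambda(\mathbf{H}(G))=\lambda(\mathbf{B})$.

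The routine parts (the row-sum computation and the linear independence of the columns of $\mathbf{P}$) are standard; the step I would treat as the crux is ensuring that the top eigenvalue of $\mathbf{H}(G)$ is actually \emph{captured} by $\mathbf{B}$ rather than merely bounded by it. Nonnegativity of $\mathbf{H}(G)$ is exactly what makes this work: it guarantees a nonnegative Perron eigenvector $z$, and nonnegativity together with each $V_i$ being nonempty forces $\mathbf{P}^{\top}z\neq 0$, so the transpose identity transfers $\lambda(\mathbf{H}(G))$ into the spectrum of $\mathbf{B}$. Symmetry of $\mathbf{A}(G)$ and $\mathbf{Q}(G)$ is not needed for this argument, though it offers an alternative route through Cauchy interlacing applied to the compression of $\mathbf{H}(G)$ to the column space of $\mathbf{P}$.
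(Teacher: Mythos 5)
Your proof is correct. The paper gives no proof of this lemma at all --- it is quoted directly from \cite{BH} (Lemma 2.3.1 there), and your argument is exactly the standard one underlying that reference: the intertwining identity $\mathbf{H}(G)\mathbf{P}=\mathbf{P}\mathbf{B}$ lifts eigenvalues of $\mathbf{B}$ to eigenvalues of $\mathbf{H}(G)$, and Perron--Frobenius nonnegativity guarantees the top eigenvalue is actually captured. One small correction to your closing remark: symmetry of $\mathbf{A}(G)$ and $\mathbf{Q}(G)$ \emph{is} used in your argument, namely in passing from $\mathbf{H}(G)\mathbf{P}=\mathbf{P}\mathbf{B}$ to $\mathbf{P}^{\top}\mathbf{H}(G)=\mathbf{B}^{\top}\mathbf{P}^{\top}$, which requires $\mathbf{H}(G)^{\top}=\mathbf{H}(G)$; to genuinely dispense with symmetry you would instead apply the original identity to a nonnegative \emph{left} Perron eigenvector of $\mathbf{H}(G)$. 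Since both matrices here are symmetric, this does not affect the validity of your proof.
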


Denote by  $o(G)$  the number of odd components of a graph $G$.

\begin{Lemma} \label{fc} \cite{Fa}
A graph $G$ is $k$-factor-critical if and only if $o(G-S)\leq |S|-k$ for every $S\subset V(G)$ with $|S|\geq k$.
\end{Lemma}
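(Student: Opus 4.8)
The plan is to derive both implications from Tutte's classical $1$-factor theorem, namely that a graph $H$ has a perfect matching if and only if $o(H-U)\le |U|$ for every $U\subseteq V(H)$. Throughout I write $S$ for a $k$-subset as in the definition of $k$-factor-criticality, and I reserve $T$ for the arbitrary test set appearing in the stated condition; the whole point will be that a Tutte test set for $G-S$ and the fixed set $S$ combine into a single test set $T$ for $G$, with the cardinality shift $|T|=|S|+|U|$ accounting for the ``$-k$''.

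For necessity, suppose $G$ is $k$-factor-critical and fix any $T\subseteq V(G)$ with $|T|\ge k$. I would choose a $k$-subset $S\subseteq T$; then $G-S$ has a perfect matching, so by the ``only if'' direction of Tutte's theorem applied to $H=G-S$ we have $o\big((G-S)-U\big)\le |U|$ for every $U\subseteq V(G)\setminus S$. Taking $U=T\setminus S$ and noting $(G-S)-U=G-T$ and $|U|=|T|-k$ gives $o(G-T)\le |T|-k$, as required.

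For sufficiency, assume $o(G-T)\le |T|-k$ for all $T$ with $|T|\ge k$, and fix an arbitrary $k$-subset $S$; I must produce a perfect matching of $H:=G-S$. I would verify Tutte's condition for $H$: for any $U\subseteq V(H)=V(G)\setminus S$, the set $T:=S\cup U$ satisfies $|T|=k+|U|\ge k$, and $H-U=G-T$, so the hypothesis yields $o(H-U)=o(G-T)\le |T|-k=|U|$. By the ``if'' direction of Tutte's theorem, $H$ has a perfect matching; since $S$ was an arbitrary $k$-subset, $G$ is $k$-factor-critical. The case $U=\emptyset$, i.e. $T=S$, already forces $o(G-S)\le 0$, so $n-k=|V(H)|$ is even and the parity needed for a perfect matching is automatic.

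There is no serious obstacle here: the argument is a bookkeeping translation between Tutte test sets for the deleted graph $G-S$ and test sets for $G$. The only points to keep straight are the quantifier handling (a single fixed $S$ versus all $T\supseteq S$ of size at least $k$) and the parity remark that ensures Tutte's theorem is applicable in the sufficiency direction.
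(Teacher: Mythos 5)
Your proof is correct. The paper itself gives no proof of this lemma---it is quoted from Favaron \cite{Fa} as a known characterization---so there is no in-paper argument to compare against. Your reduction to Tutte's $1$-factor theorem is the standard (essentially Favaron's) argument: in one direction you split an arbitrary test set $T$ as a $k$-subset $S$ plus the Tutte test set $U=T\setminus S$ for $G-S$, and in the other you merge a fixed $k$-subset $S$ with a Tutte test set $U\subseteq V(G)\setminus S$ into the single set $T=S\cup U$, the disjointness of $S$ and $U$ giving exactly the cardinality shift $|T|=|U|+k$ that produces the ``$-k$''. Both implications are handled correctly, and your closing parity remark (the case $U=\emptyset$ forces $o(G-S)=0$, hence $|V(G-S)|$ even) correctly disposes of the only point where applicability of Tutte's theorem could be questioned.
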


%\begin{Lemma} \label{change} \cite{FG}
%Let $n=\sum_{i=1}^t n_i+s$. If $n_1\geq n_2\geq \dots \geq n_t\geq p$ and $n_1<n-s-p(t-1)$, then
%\[
%\rho(K_s\vee(K_{n_1}+K_{n_2}+\dots+K_{n_t}))<\rho(K_s\vee(K_{n-s-p(t-1)}+(t-1)K_p)),
%\]
%and
%\[
%q(K_s\vee(K_{n_1}+K_{n_2}+\dots+K_{n_t}))<q(K_s\vee(K_{n-s-p(t-1)}+(t-1)K_p)).
%\]
%\end{Lemma}

Given a connected graph $G$ and  $\alpha=0,1$, we denote by $\lambda_\alpha(G)$ the spectral radius of the matrix $\alpha\mathbf{D}(G)+\mathbf{A}(G)$. By Perron-Frobenius theorem, there is a unique unit positive eigenvector corresponding to $\lambda_\alpha(G)$, which is called the Perron vector.

For the graph $G=K_s\vee (K_{n_1}\cup \dots \cup K_{n_t})$ with $s\ge 1$, the out copy of $G$ denotes $K_s$ and the $i$-th graph in
$K_{n_1}\cup \dots \cup K_{n_t}$ is called the $i$-th inner copy of $G$, where $i=1,\dots, t$.
If $X$ is the Perron vector of $\alpha \mathbf{D}(G)+\mathbf{A}(G)$ with $\alpha=\{0,1\}$, then
entries of $X$  at any two vertices in the out copy or the same inner copy are equal by symmetry.

\begin{Lemma} \label{SP}
For positive integers $s, t, n_1, \dots n_t$ with
$t\ge 2$ and $n_1\le \dots \le n_t$, let $G=K_s\vee (K_{n_1}\cup \dots \cup K_{n_t})$ and $X$ be the Perron vector of $\alpha \mathbf{D}(G)+\mathbf{A}(G)$ with $\alpha=\{0,1\}$ , where for $i=1,\dots, t$, $x_i$ is the entry of $X$ at any vertex of the $i$-th inner copy of $G$. Then $x_i\le x_{i+1}$ for $i=1,\dots, t-1$.
\end{Lemma}

\begin{proof}
Denote by $x_0$ the entry of $X$ at any vertex of the out copy of $G$. Then
\[
(\lambda_\alpha(G)-\alpha(n_i-1+s)+1-n_i)x_i=sx_0=(\lambda_{\alpha}(G)-\alpha(n_{i+1}-1+s)+1-n_{i+1})x_{i+1}
\]
for $i=1,\dots, n_t-1$.
As $n_i\le n_{i+1}$ and $\lambda_{\alpha}(G)> \lambda_{\alpha}(K_{n_{i+1}})=(\alpha+1)(n_{i+1}-1)$, one gets
$x_i\le x_{i+1}$.
\end{proof}

The following is known in \cite{WXH} when $\alpha=0$ and in \cite{HZ} when $\alpha=1$, see also \cite{NP}.

\begin{Lemma} \label{GE} Let $G$ be a connected graph and $u$ and $v$ be two  vertices of $G$.
Let $X$ be the Perron vector of $\alpha \mathbf{D}(G)+\mathbf{A}(G)$ with $x_u\ge x_v$, where $\alpha=\{0,1\}$. Suppose that $N_G(v)\setminus (N_G(u)\cup \{u\})\ne \emptyset$. Then for any nonempty $N\subseteq N_G(v)\setminus (N_G(u)\cup \{u\})$,
\[
\lambda_\alpha(G-\{vw: w\in N\}+\{uw: w\in N\})>\lambda_{\alpha}(G).
\]
\end{Lemma}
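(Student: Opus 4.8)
The plan is to prove the inequality by the Rayleigh quotient, using the given Perron vector $X$ as a test vector. Write $G'=G-\{vw:w\in N\}+\{uw:w\in N\}$ and set $M=\alpha\mathbf{D}(G)+\mathbf{A}(G)$, $M'=\alpha\mathbf{D}(G')+\mathbf{A}(G')$. Since $G$ is connected, $M$ is nonnegative and irreducible, so by Perron--Frobenius $X$ is a positive unit vector with $MX=\lambda_\alpha(G)X$. Both $M$ and $M'$ are symmetric and nonnegative, so their spectral radii equal their largest eigenvalues, and the variational characterization gives $\lambda_\alpha(G')=\max_{\|Y\|=1}Y^{\top}M'Y\ge X^{\top}M'X$, with equality only if $X$ lies in the eigenspace of $\lambda_\alpha(G')$. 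The whole argument is meant to run uniformly for $\alpha=0$ and $\alpha=1$.

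First I would record the bilinear identity $Y^{\top}(\alpha\mathbf{D}(H)+\mathbf{A}(H))Y=\sum_{\{a,b\}\in E(H)}\bigl(\alpha(y_a^2+y_b^2)+2y_ay_b\bigr)$, valid for any graph $H$ and any vector $Y$. Applying this to $M'$ and $M$ and cancelling the common edges is legitimate because the only changes are the deletion of the edges $vw$ ($w\in N$) and the addition of the edges $uw$ ($w\in N$); here $u,v\notin N$ and $N\cap N_G(u)=\emptyset$, so each added edge is genuinely new and the edge set is modified exactly on these pairs. The computation then yields
\[
X^{\top}(M'-M)X=(x_u-x_v)\sum_{w\in N}\bigl(\alpha(x_u+x_v)+2x_w\bigr).
\]
Because $X$ is positive, $x_u\ge x_v$, $\alpha\ge 0$ and $N\neq\emptyset$, the sum is strictly positive and the whole expression is $\ge 0$. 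Hence $X^{\top}M'X\ge X^{\top}MX=\lambda_\alpha(G)$, and therefore $\lambda_\alpha(G')\ge\lambda_\alpha(G)$.

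The delicate point, which I expect to be the main obstacle, is upgrading this to a strict inequality, since the displayed difference vanishes when $x_u=x_v$ and the plain Rayleigh estimate then gives only $\ge$. I would resolve this by contradiction: if $\lambda_\alpha(G')=\lambda_\alpha(G)$, then the chain $\lambda_\alpha(G)\le X^{\top}M'X\le\lambda_\alpha(G')=\lambda_\alpha(G)$ forces $X^{\top}M'X=\lambda_\alpha(G')$, so $X$ attains the maximum Rayleigh quotient of $M'$ and is thus an eigenvector, i.e.\ $M'X=\lambda_\alpha(G')X$. Reading off the coordinate at $u$, where in $G'$ the vertex $u$ has gained exactly the neighbours in $N$ and $|N|$ extra degree, and subtracting the corresponding equation $(MX)_u=\lambda_\alpha(G)x_u$, gives
\[
\bigl(\lambda_\alpha(G')-\lambda_\alpha(G)\bigr)x_u=\alpha|N|x_u+\sum_{w\in N}x_w.
\]
The right-hand side is strictly positive while the left-hand side is $0$, a contradiction. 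Hence $\lambda_\alpha(G')>\lambda_\alpha(G)$ in all cases, covering both $\alpha=0$ and $\alpha=1$ at once. The eigenvector-equality step is precisely what rules out the borderline case $x_u=x_v$ that the Rayleigh bound alone cannot handle.
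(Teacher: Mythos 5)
Your proof is correct and complete: the bilinear identity, the cancellation of common edges (justified by $u\notin N$ and $N\cap N_G(u)=\emptyset$), and the Rayleigh estimate give $\lambda_\alpha(G')\ge\lambda_\alpha(G)$, and your equality analysis via the eigenvector equation at the coordinate $u$ correctly disposes of the borderline case $x_u=x_v$. The paper itself offers no proof of this lemma --- it is quoted from the cited references --- but your argument is precisely the standard one used there, so there is nothing to add.
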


\section{Results}

For integers $\delta, k, n$ with $\delta>k$ and  $n> 2\delta-k+1$,
$H(n,\delta,k)=K_\delta\vee ((\delta-k+1)K_1\cup K_{n-2\delta+k-1})$.

\begin{Lemma} \label{h2} For positive integers $\delta, s,k, n$ with $n\geq s+(s-k+2)(\delta-s+1)$ and $k\le s<\delta$,
let $H'_{s}=K_s\vee ((s-k+1)K_{\delta-s+1}\cup K_{n-s-(s-k+1)(\delta-s+1)})$. Then
\[
\rho(H'_{s})<\rho(H(n,\delta,k)) \mbox{ and } q(H'_{s})<q(H(n,\delta,k)).
\]
\end{Lemma}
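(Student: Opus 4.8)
The plan is to turn the comparison into a statement about the largest roots of two $3\times 3$ equitable quotient matrices, and then settle the relevant sign by a polynomial computation. First I would observe that $H(n,\delta,k)=H'_\delta$, i.e.\ $H(n,\delta,k)$ is exactly $H'_s$ in the limiting case $s=\delta$ where each inner copy has one vertex. For a general $s$, the partition of $V(H'_s)$ into the out copy $K_s$, the union of the $(s-k+1)$ inner copies $K_{\delta-s+1}$, and the remaining clique $K_N$ with $N=n-s-(s-k+1)(\delta-s+1)$ is equitable for both $\mathbf A$ and $\mathbf Q$, since every inner-copy vertex has exactly $s$ neighbours in the out copy and $\delta-s$ neighbours inside its own copy. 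By Lemma~\ref{equit}, $\rho(H'_s)$ is the largest root of the characteristic polynomial $\phi_s$ of the adjacency quotient matrix
\[
\begin{pmatrix}
s-1 & (s-k+1)(\delta-s+1) & N\\
s & \delta-s & 0\\
s & 0 & N-1
\end{pmatrix},
\qquad N=n-s-(s-k+1)(\delta-s+1),
\]
and $\rho(H(n,\delta,k))$ is the largest root of the corresponding cubic $\phi_\delta$, which is precisely $f(x)$ of Theorem~\ref{Cai1}. In the same way $q(H'_s)$ and $q(H(n,\delta,k))$ are the largest roots of the characteristic polynomials $\psi_s$ and $\psi_\delta$ of the signless Laplacian quotient matrices, where $\psi_\delta=g(x)$ of Theorem~\ref{Cai2}.

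To prove $\rho(H'_s)<\rho(H(n,\delta,k))$ I would exploit that $\phi_\delta$ is a monic cubic, so $\phi_\delta(x)\ge 0$ for every $x\ge\rho(H(n,\delta,k))$; hence it suffices to establish the single inequality $\phi_\delta(\rho(H'_s))<0$. The point of doing this is that $\rho(H'_s)$ is a root of $\phi_s$, so
\[
\phi_\delta(\rho(H'_s))=\phi_\delta(\rho(H'_s))-\phi_s(\rho(H'_s))=(\phi_\delta-\phi_s)(\rho(H'_s)),
\]
and $\phi_\delta-\phi_s$ is only a quadratic, whose coefficients are explicit polynomials in $n,\delta,k,s$. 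Since $K_N$ is a subgraph of $H'_s$ one has the lower bound $\rho(H'_s)\ge\rho(K_N)=N-1$ (and the hypothesis $n\ge s+(s-k+2)(\delta-s+1)$ gives $N\ge\delta-s+1\ge2$), while evaluating $\phi_s$ at a suitable point slightly larger than $N$ gives a matching upper bound; this confines $\rho(H'_s)$ to a short explicit interval. It then remains to check that the quadratic $\phi_\delta-\phi_s$ is negative throughout that interval, and this is exactly where the assumption on $n$ is consumed.

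The signless Laplacian case runs in complete parallel: one shows $\psi_\delta(q(H'_s))=(\psi_\delta-\psi_s)(q(H'_s))<0$, using the subgraph bound $q(H'_s)\ge q(K_N)=2N-2$ to localise $q(H'_s)$ and then verifying that the quadratic $\psi_\delta-\psi_s$ is negative on the resulting interval. The main obstacle is precisely this last step in both cases: carrying out the two quadratic sign analyses uniformly over the whole range $k\le s<\delta$. The coefficients of $\phi_\delta-\phi_s$ and of $\psi_\delta-\psi_s$ change sign in a way that depends delicately on $s$, and (as small numerical examples already show) the gap $\rho(H(n,\delta,k))-\rho(H'_s)$ can be quite small, so the upper bound on $\rho(H'_s)$ must be tight enough to keep it strictly inside the negativity interval of $\phi_\delta$; the bookkeeping of these inequalities, rather than any conceptual difficulty, is the heart of the argument.
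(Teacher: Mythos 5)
Your setup is correct as far as it goes: the three-cell partition of $V(H'_s)$ is indeed equitable for both $\mathbf{A}$ and $\mathbf{Q}$ (each inner-copy vertex has $s$ neighbours in the out copy, $\delta-s$ inside its own copy, and none elsewhere), $H(n,\delta,k)=H'_\delta$, and the reduction ``it suffices to show $\phi_\delta(\rho(H'_s))<0$, i.e.\ $(\phi_\delta-\phi_s)(\rho(H'_s))<0$'' is logically sound because $\phi_\delta$ is monic with largest root $\rho(H(n,\delta,k))$. But your proof stops exactly where the work has to happen. You never carry out the sign analysis of the quadratic $\phi_\delta-\phi_s$ (nor of $\psi_\delta-\psi_s$): its coefficients are not computed, the promised upper bound on $\rho(H'_s)$ (``evaluating $\phi_s$ at a suitable point slightly larger than $N$'') is neither specified nor verified, and you yourself concede that the coefficients change sign with $s$, that the gap $\rho(H(n,\delta,k))-\rho(H'_s)$ can be small, and that this bookkeeping ``is the heart of the argument.'' A proof whose heart is declared but not executed is not a proof. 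The concern is not merely formal: unlike Lemmas \ref{h1} and \ref{h3}, the present lemma assumes only $n\geq s+(s-k+2)(\delta-s+1)$ rather than $n\geq 4\delta+3$ or $n\geq 9\delta-2k+12$, so there is very little slack for crude interval estimates, and you give no evidence that your localization interval is short enough for the quadratic to keep a constant negative sign on it uniformly over $k\le s<\delta$. (A small symptom: the lower bound you quote, $\rho(H'_s)\ge \rho(K_N)=N-1$, should at least be $\rho(H'_s)\ge s+N-1$, since $K_s\vee K_N=K_{s+N}\subseteq H'_s$; with only $N-1$ the interval is needlessly long.)

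For comparison, the paper proves this lemma with no polynomial estimates at all, which is why it can get away with the weak hypothesis on $n$. It takes the Perron vector $X$ of $H'_s$, notes via Lemma \ref{SP} that its value $x_2$ on the inner copies is at most its value $x_3$ on the big clique, and then shifts edges: delete the edges inside one inner copy $K_{\delta-s+1}$ and join $\delta-s$ vertices of the big clique to all of $V_1$, producing $H'=K_\delta\vee((\delta-s+1)K_1\cup(s-k)K_{\delta-s+1}\cup K_{n-2\delta+s-(s-k)(\delta-s+1)-1})$. Rayleigh's principle gives $\rho(H'_s)<\rho(H')$, and Lemmas \ref{SP} and \ref{GE} give $\rho(H')\le\rho(H(n,\delta,k))$; the identical computation with $(y_u+y_v)^2$ handles $q$. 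If you want to keep your route, you must supply the full quadratic analysis (and likely a sharper two-sided localization of $\rho(H'_s)$ and $q(H'_s)$); the paper's perturbation argument shows that this computation can be bypassed entirely.
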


\begin{proof}
Partition $V(H'_{s})$ into $S\cup V_1\cup V_2$, where $V_1=V((s-k+1)K_{\delta-s+1})$ and $V_2=V(K_{n-s-(s-k+1)(\delta-s+1)})$.
Assume that
\[
S=\{u_1,u_2,\dots,u_s\}, V_1=\{v_1,v_2,\dots,v_{(s-k+1)(\delta-s+1)}\},
\]
\[
V_2=\{w_1,w_2,\dots,w_{n-s-(s-k+1)(\delta-s+1)}\}.
\]
Let $X$ ($Y$, respectively) be the Perron vector of $\mathbf{A}(H'_s)$ ($\mathbf{Q}(H'_s)$). By symmetry, $X$ ($Y$, respectively) takes the same value, say $x_1, x_2$ and $x_3$ ($y_1, y_2$ and $y_3$, respectively) at vertices of $S, V_1$ and $V_2$, respectively.
Evidently, $n-s-(s-k+1)(\delta-s+1)\ge \delta-s+1$.
By Lemma~\ref{SP}, we have $x_2\leq x_3$ ($y_2\le y_3$, respectively).

Let  $H'\cong H'_s+E_1-E_2$, where
\[
E_1=\{w_iv_j:1\leq i\leq\delta-s, 1\leq j\leq(s-k+1)(\delta-s+1)\},
\]
\[
E_2=\{v_iv_j:1\leq i<j\leq\delta-s+1\}.
\]
Then  $H'\cong K_{\delta}\vee((\delta-s+1)K_1\cup(s-k)K_{\delta-s+1}\cup K_{n-2\delta+s-(s-k)(\delta-s+1)-1})$.
Thus
\begin{align*}
&\quad X^T(\mathbf{A}(H')-\mathbf{A}(H'_s))X\\
&= 2\sum_{uv\in E_1}{x_ux_v}-2\sum_{uv\in E_2}{x_ux_v}\\
&=2 \sum_{i=1}^{\delta-s} \sum_{j=1}^{(s-k+1)(\delta-s+1)}{x_{w_i}x_{v_j}}-2\sum_{i=1}^{\delta-s} \sum_{j=i+1}^{\delta-s+1}{x_{v_i}x_{v_j}}\\
&=2(\delta-s)(s-k+1)(\delta-s+1)x_2x_3-(\delta-s+1)(\delta-s)x^2_2\\
&=(\delta-s+1)(\delta-s)x_2(2(s-k+1)x_3-x_2)\\
&>0.
\end{align*}
Now Rayleigh's principle implies that
$\rho(H'_s)=X^T\mathbf{A}(H'_s)X<X^T\mathbf{A}(H')X\le \rho(H')$.
By Lemmas  \ref{SP} and \ref{GE} (whether $n-2\delta+s-(s-k)(\delta-s+1)-1\ge \delta-s+1$ or not), we have $\rho(H')\leq\rho(H(n,\delta,k))$. It thus follows that $\rho(H'_{s})<\rho(H(n,\delta,k))$.

Similarly, one also has
\begin{align*}
&\quad Y^T(\mathbf{Q}(H')-\mathbf{Q}(H'_{s}))Y\\
&= \sum_{uv\in E_1}{(y_u+y_v)^2}-\sum_{uv\in E_2}{(y_u+y_v)^2}\\
&= \sum_{i=1}^{\delta-s} \sum_{j=1}^{(s-k+1)(\delta-s+1)}{(y_{w_i}+y_{v_j})^2}-\sum_{i=1}^{\delta-s} \sum_{j=i+1}^{\delta-s+1}{(y_{v_i}+y_{v_j})^2}\\
&=(\delta-s)(s-k+1)(\delta-s+1)(y_2+y_3)^2-(\delta-s+1)(\delta-s)(2y_2)^2\\
&>0,
\end{align*}
and by Rayleigh's principle,
$q(H'_s)=Y^T\mathbf{Q}(H'_s)Y<Y^T\mathbf{Q}(H')Y\le q(H')$.
By Lemmas \ref{SP} and \ref{GE}, we have $q(H')\leq q(H(n,\delta,k))$. Thus  $q(H'_{s})<q(H(n,\delta,k))$.
\end{proof}

\begin{Lemma} \label{JISUAN} For integers $\delta, k, n$ with $\delta>k$ and  $n> 2\delta-k+1$,
$\rho(H(n,\delta,k))$ is equal to the largest root of $f(x)=0$, where
\begin{align*}
f(x)& =x^3-(n+k-\delta-3)x^2-(n+\delta^2-k\delta+k-2)x\\
&\quad -2\delta^3+(n+3k-4)\delta^2+(n+3k-nk-k^2-2)\delta.
\end{align*}

$q(H(n,\delta,k))$ is equal to the largest root of $g(x)=0$, where
\begin{align*}
g(x)&=x^3-(3n-\delta+2k-6)x^2\\
&\quad +(2n^2+(\delta+2k-8)n-4\delta^2+4(k-1)\delta-4k+8)x\\
&\quad -2\delta^3+(4n+4k-10)\delta^2-(2n^2+(4k-10)n+2k^2-10k+12)\delta.
\end{align*}
\end{Lemma}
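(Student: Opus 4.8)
The plan is to apply the equitable quotient matrix machinery of Lemma~\ref{equit}. I would partition $V(H(n,\delta,k))$ into the three natural blocks $V_1=V(K_\delta)$, $V_2=V((\delta-k+1)K_1)$ and $V_3=V(K_{n-2\delta+k-1})$, namely the out copy and the two inner copies. First I would check that this partition is equitable for both $\mathbf{A}(H)$ and $\mathbf{Q}(H)$: a vertex of $V_1$ has $\delta-1$ neighbours in $V_1$ and, by the join, all of $V_2$ and all of $V_3$ as neighbours; a vertex of $V_2$ is adjacent only to the $\delta$ vertices of $V_1$; and a vertex of $V_3$ is adjacent to the $\delta$ vertices of $V_1$ together with the remaining $n-2\delta+k-2$ vertices of $V_3$. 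Since each count is constant across a block, the partition is equitable, and the three vertex degrees $n-1$, $\delta$, $n-\delta+k-2$ are likewise constant on the blocks.

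With these row sums in hand, the adjacency quotient matrix is
\[
\mathbf{B}_A=\begin{pmatrix}
\delta-1 & \delta-k+1 & n-2\delta+k-1\\
\delta & 0 & 0\\
\delta & 0 & n-2\delta+k-2
\end{pmatrix},
\]
and, adding the degree to each diagonal entry, the signless Laplacian quotient matrix is
\[
\mathbf{B}_Q=\begin{pmatrix}
n+\delta-2 & \delta-k+1 & n-2\delta+k-1\\
\delta & \delta & 0\\
\delta & 0 & 2n-3\delta+2k-4
\end{pmatrix}.
\]
By Lemma~\ref{equit}, $\rho(H(n,\delta,k))$ is the largest eigenvalue of $\mathbf{B}_A$ and $q(H(n,\delta,k))$ is the largest eigenvalue of $\mathbf{B}_Q$, so it suffices to show $\det(x\mathbf{I}-\mathbf{B}_A)=f(x)$ and $\det(x\mathbf{I}-\mathbf{B}_Q)=g(x)$.

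The remaining work is to expand the two $3\times 3$ characteristic determinants and match coefficients. Expanding $\det(x\mathbf{I}-\mathbf{B}_A)$ along the second row (which has two zero entries) is quickest and yields a cubic whose $x^2$-coefficient is minus the trace, $-(n+k-\delta-3)$, and whose constant term is $\delta(\delta-k+1)(n-2\delta+k-2)$; collecting these as polynomials in $\delta$ reproduces $f(x)$ exactly. The same expansion for $\mathbf{B}_Q$ is the only genuinely laborious part, since its entries are larger and the lower coefficients of $g(x)$ are quadratic and cubic expressions in $n,\delta,k$. I would organise that computation by first recording the trace $3n-\delta+2k-6$ and the three $2\times 2$ principal minors as a check before assembling the determinant. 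No conceptual obstacle arises; the difficulty is purely bookkeeping in the signless Laplacian expansion, and a trace/minor consistency check is the safeguard I would use against algebraic slips.
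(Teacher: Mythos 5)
Your proposal is correct and follows exactly the paper's own proof: the paper likewise applies Lemma~\ref{equit} to the three-block equitable partition and obtains precisely the two quotient matrices you write down, with the characteristic-polynomial expansion left as routine calculation. Nothing is missing.
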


\begin{proof} By Lemma \ref{equit},
$\rho(H(n,\delta,k))$ is equal to the largest eigenvalue of
\[
\begin{pmatrix}
\delta-1 & \delta-k+1 & n-2\delta+k-1\\
\delta & 0  & 0\\
\delta & 0 & n-2\delta+k-2
\end{pmatrix}
\]
and $q(H(n,\delta,k))$  is equal to the largest eigenvalue of
\[
\begin{pmatrix}
n+\delta-2 & \delta-k+1 & n-2\delta+k-1\\
\delta & \delta  & 0\\
\delta & 0 & 2n-3\delta+2k-4
\end{pmatrix}.
\]
Now the result follows.
\end{proof}

\subsection{Spectral radius}

\begin{Lemma} \label{h1} For positive integers $\delta, s,k, n$ with $n\geq4\delta+3$ and $\max\{\delta+1, k\}\le s\leq\frac{n+k-2}{2}$,
\[
\rho(K_s\vee ((s-k+1)K_1\cup K_{n-2s+k-1}))<\rho(H(n,\delta,k)).
\]
\end{Lemma}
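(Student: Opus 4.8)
The plan is to realize both spectral radii as the largest roots of the characteristic cubics of $3\times 3$ equitable quotient matrices and then to compare those cubics. Write $G_s=K_s\vee((s-k+1)K_1\cup K_{n-2s+k-1})$ and partition its vertices into the out copy $K_s$, the $s-k+1$ isolated inner vertices, and the clique $K_{n-2s+k-1}$. By Lemma~\ref{equit}, $\rho(G_s)$ is the largest eigenvalue of
\[
B_s=\begin{pmatrix} s-1 & s-k+1 & n-2s+k-1\\ s & 0 & 0\\ s & 0 & n-2s+k-2\end{pmatrix},
\]
hence the largest root of $f_s(x)=x^3+(s-n-k+3)x^2+(ks-s^2-n-k+2)x+s(s-k+1)(n-2s+k-2)$, where $f_\delta=f$ and $\rho(H(n,\delta,k))$ is the largest root of $f_\delta$ by Lemma~\ref{JISUAN}. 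A direct subtraction gives the factorization $f_s(x)-f_\delta(x)=(s-\delta)R(x)$ with
\[
R(x)=x^2-(s+\delta-k)x-2(\delta^2+\delta s+s^2)+(n+3k-4)(\delta+s)+(n-kn-k^2+3k-2).
\]
Since $s\ge\delta+1$, the factor $s-\delta$ is positive, so the whole comparison is governed by the sign of $R$ at the relevant root.

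First I would record two exact evaluations obtained by cancellation. Writing $m=n-2s+k-1$, the substitution $x=m-1=n-2s+k-2$ annihilates the factor $x-m+1$ present in $B_s$'s characteristic polynomial, so $f_s(n-2s+k-2)=-ms(m-1)\le 0$, because $m\ge 1$ (as $s\le\frac{n+k-2}{2}$) and $m-1\ge 0$. The constant term of $f_s$ shows that the product of its three roots equals $-s(s-k+1)(n-2s+k-2)\le 0$, so with $\rho(G_s)>0$ the smallest root $r_1$ satisfies $r_1\le 0\le n-2s+k-2$; combined with $f_s(n-2s+k-2)\le 0$, this places $n-2s+k-2$ in the interval $[r_2,\rho(G_s)]$ and hence forces the second largest root $r_2$ of $f_s$ to obey $r_2\le n-2s+k-2$. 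Secondly, since $K_{n-\delta+k-1}=K_\delta\vee K_{n-2\delta+k-1}$ is a subgraph of $H(n,\delta,k)$, Lemma~\ref{inequit} gives $\rho(H(n,\delta,k))\ge n-\delta+k-2$.

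The core step is to evaluate $f_s$ at $\lambda:=\rho(H(n,\delta,k))$. Using $f_\delta(\lambda)=0$ and the factorization, $f_s(\lambda)=(s-\delta)R(\lambda)$, so it suffices to prove $R(\lambda)>0$. The parabola $R$ has vertex at $\frac{s+\delta-k}{2}$, and the hypotheses $s\le\frac{n+k-2}{2}$ and $n\ge 4\delta+3$ yield $n-\delta+k-2\ge\frac{s+\delta-k}{2}$, so $R$ is increasing on $[n-\delta+k-2,\infty)$; together with $\lambda\ge n-\delta+k-2$ it then suffices to show $R(n-\delta+k-2)>0$. This final polynomial inequality, viewed as a quadratic in $n$ (or in $s$) and analysed over the admissible range $\delta+1\le s\le\frac{n+k-2}{2}$, $0\le k<\delta$, $n\ge 4\delta+3$, is where the real work lies and is the main obstacle of the argument; every other step is a one-line cancellation or a sign/monotonicity observation. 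Granting it, $R(\lambda)>0$, whence $f_s(\lambda)=(s-\delta)R(\lambda)>0$.

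Finally I would assemble the pieces. Because $s>\delta$ we have $\lambda\ge n-\delta+k-2>n-2s+k-2\ge r_2$, so $\lambda$ lies strictly to the right of the second largest root of $f_s$. On $(r_2,\infty)$ the monic cubic $f_s$ is negative on $(r_2,\rho(G_s))$ and positive on $(\rho(G_s),\infty)$; since $f_s(\lambda)>0$ and $\lambda>r_2$, this forces $\lambda>\rho(G_s)$. Thus $\rho(G_s)<\rho(H(n,\delta,k))$, which is precisely the assertion of the lemma. I expect the verification of $R(n-\delta+k-2)>0$ to be the only genuinely laborious computation, the tight boundary case being $n=4\delta+3$ with $s$ near its upper limit.
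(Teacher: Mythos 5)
Your proposal follows the paper's proof almost step for step: the same equitable quotient matrix and cubic $f_s$, the same factorization $f_s(x)-f_\delta(x)=(s-\delta)R(x)$ (the paper calls $R$ by the name $h$), the same observation that $R$ is increasing on $[n-\delta+k-2,\infty)$, and the same evaluation point $\lambda=\rho(H(n,\delta,k))\ge n-\delta+k-2$. Your endgame is in fact more careful than the paper's: by locating the second-largest root of $f_s$ below $n-2s+k-2$ (via $f_s(n-2s+k-2)=-sm(m-1)\le 0$ with $m=n-2s+k-1$, together with the sign of the constant term) you make explicit why $f_s(\lambda)>0$ forces $\lambda>\rho(G_s)$, a point the paper passes over with ``implying that.''

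However, there is a genuine gap: you never prove the inequality $R(n-\delta+k-2)>0$; you explicitly ``grant'' it. That inequality is the entire content of the lemma --- it is the only place where the hypotheses $n\ge 4\delta+3$, $k<\delta$ and $\delta+1\le s\le\frac{n+k-2}{2}$ enter quantitatively, and without it nothing has been established. The paper closes this by a two-stage minimization. First,
\[
R(n-\delta+k-2)=-2s^2+(2k-\delta-2)s+n^2+(2k-3)n-2\delta^2+(k-2)\delta+2k^2-3k+2
\]
is a concave quadratic in $s$ whose axis $\frac{2k-\delta-2}{4}$ lies to the left of $\delta+1$ (using $k<\delta$), so its minimum over the admissible range of $s$ is attained at $s=\frac{n+k-2}{2}$. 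Substituting that value gives
$\frac{1}{2}n^2+\left(2k-\frac{1}{2}\delta-2\right)n-2\delta^2+\left(\frac{1}{2}k-1\right)\delta+\frac{5}{2}k^2-4k+2$,
a convex and increasing function of $n$ on $[4\delta+3,\infty)$, and plugging in $n=4\delta+3$ yields
$4\delta^2+\frac{17}{2}\delta k+\frac{3}{2}\delta+\frac{5}{2}k^2+2k+\frac{1}{2}>0$.
You need to carry out this computation (or an equivalent one); as written, your proposal defers exactly the step that does the work, so it is an outline of the paper's argument rather than a proof.
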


\begin{proof} Let $H_s=K_s\vee ((s-k+1)K_1\cup K_{n-2s+k-1})$.
Partition $V(H_{s})$ into $S\cup V_1\cup V_2$, where $V_1=V((s-k+1)K_1)$ and $V_2=V(K_{n-2s+k-1})$.
It is easy to see that this partition is an equitable partition.  The corresponding quotient matrix is
 \[
\mathbf{B}=
 \begin{pmatrix}
     s-1 &  s-k+1 & n-2s+k-1 \\
     s & 0& 0\\
     s & 0& n-2s+k-2
 \end{pmatrix}.
 \]
By a simple calculation, the characteristic polynomial of $\mathbf{B}$ is
\begin{align*}
f_s(x)=&x^3-(n+k-s-3)x^2-(n+s^2-ks+k-2)x-2s^3\\
&+(n+3k-4)s^2+(n+3k-nk-k^2-2)s.
\end{align*}
By Lemma \ref{equit},  $\rho(H_{s})$ is equal to the largest root of the equation $f_s(x)=0$.
Similarly,
$\rho(H(n,\delta,k))$ is equal to the largest root of $f_\delta(x)=0$. It is easy to see that
\begin{align*}
\frac{f_s(x)-f_\delta(x)}{s-\delta}=h(x)&:=x^2-(s+\delta-k)x-2(\delta^2+\delta s+s^2)\\
&\quad  +(n+3k-4)(s+\delta)+n-k^2-nk+3k-2.
\end{align*}
%Next, we show that
%$g(x)>0$ for $x\in[n-\delta+k-2,+\infty)$.
The symmetry axis of $h(x)$ is $x=x_0:=\frac{s+\delta-k}{2}<s$.
Since $n\geq2s-k+2$, we have $s\le n-s+k-2<n-\delta+k-2$. Thus $x_0<n-\delta+k-2$.
It then follows  that  $h(x)$ is strictly increasing for $x\in[n-\delta+k-2,+\infty)$.
Thus
\begin{align*}
h(x) & \geq h(n-\delta+k-2)\\
&=-2s^2+(2k-\delta-2)s+n^2+(2k-3)n-2\delta^2+(k-2)\delta+2k^2-3k+2.
\end{align*}
As $\delta>k$, we have $\frac{2k-\delta-2}{4}<\delta+1$. Note also that
$\delta+1\leq s\leq\frac{n+k-2}{2}$ and $n\geq 4\delta+3$. Thus
\begin{align*}
h(x)
&\geq -2\left(\frac{n+k-2}{2}\right)^2+(2k-\delta-2)\frac{n+k-2}{2}+n^2+(2k-3)n\\
&\quad -2\delta^2+(k-2)\delta+2k^2-3k+2\\
&=\frac{1}{2}n^2+\left(2k-\frac{1}{2}\delta-2\right)n-2\delta^2+\left(\frac{1}{2}k-1\right)\delta+\frac{5}{2}k^2-4k+2\\
&\geq\frac{1}{2}(4\delta+3)^2+\left(2k-\frac{1}{2}\delta-2\right)(4\delta+3)-2\delta^2+\left(\frac{1}{2}k-1\right)\delta+\frac{5}{2}k^2-4k+2\\
& =4\delta^2+\frac{17}{2}\delta k+\frac{3}{2}\delta+\frac{5}{2}k^2+2k+\frac{1}{2}\\
& >0.
\end{align*}
Thus $f_s(x)>f_\delta(x)$ for $x\in[n-\delta+k-2,+\infty)$.  By Lemma \ref{inequit}, we have
$\rho(H(n,\delta,k))>\rho(K_{n-\delta+k-1})=n-\delta+k-2$,
so
$f_s(\rho(H(n,\delta,k)))>f_\delta(\rho(H(n,\delta,k)))$,
implying that $\rho(H_s)<\rho(H(n,\delta,k))$.
\end{proof}

Now we are ready to prove Theorem  \ref{Cai1}.

\begin{proof}[Proof of Theorem  \ref{Cai1}]
Suppose contradiction that $G$ is not $k$-factor-critical. By Lemma \ref{fc}, there exists a vertex subset $S\subset V(G)$ such that $o(G-S)> |S|-k$, where $|S|\geq k$. Let $|S|=s$. Since $k\equiv n\equiv s+o(G-S) \pmod 2$, one gets $o(G-S)\equiv k-s \pmod 2$. Thus  $o(G-S)\geq s-k+2$. Since $n\geq s+o(G-S)$, we have $n\geq2s-k+2$, i.e., $s\le \frac{1}{2}(n+k-2)$.

Let $t=s-k+2$.
Let $n_1\le \dots \le n_{t-1}$ be the orders
the odd components of $G-S$ with the first $t-1$ smallest orders, and let $n_t=n-s-n_1-\dots-n_{t-1}$.
Then $G$ is a spanning subgraph of the graph $G':=K_s\vee (K_{n_1}\cup \dots \cup K_{n_t})$.
By Lemma \ref{inequit},
\[
\rho(G)\le  \rho(G')
\]
with equality if and only if $G\cong G'$.
Let
$H_s=K_s\vee((s-k+1)K_1\cup K_{n-2s+k-1})$ if $s\ge 1$.

\noindent
{\bf Case 1.} $s<\delta$.

If $s=0$, then $k=0$,  $t=2$, $n_1+n_2=n$ and $n_2\ge n_1\ge \delta+1$, so $\rho(G)\le \rho(G')=\rho(K_{n_2})=n_2-1=n-n_1-1\le n-\delta-2=\rho(K_{n-\delta-1})<\rho(H(n,\delta,k))$ by Lemma \ref{inequit}, which is a contradiction.

Suppose that $s\ge 1$. Then $n_1\ge \delta+1-s$ and
$n-s-(s-k+1)(\delta-s+1)\ge n_t\ge \delta-s+1$.
\[
n\geq \delta+(s-k+2)(\delta-s+1)
\]

By Lemmas  \ref{SP} and \ref{GE}, we have
\[
\rho(G')\le \rho(K_s\vee((s-k+1)K_{\delta+1-s}\cup K_{n-s-(s-k+1)(\delta+1-s)}))
\]
with equality if and only if $G'\cong K_s\vee((s-k+1)K_{\delta+1-s}\cup K_{n-s-(s-k+1)(\delta+1-s)})$.
Thus
\[
\rho(G)\le \rho(K_s\vee((s-k+1)K_{\delta+1-s}\cup K_{n-s-(s-k+1)(\delta+1-s)}))
\]
with equality if and only if $G\cong K_s\vee((s-k+1)K_{\delta+1-s}\cup K_{n-s-(s-k+1)(\delta+1-s)})$.
By Lemma \ref{h2},
\[
\rho(K_s\vee((s-k+1)K_{\delta+1-s}\cup K_{n-s-(s-k+1)(\delta+1-s)}))<\rho(H(n,\delta,k)),
\]
which is a contradiction.

\noindent
{\bf Case 2.} $s\ge \delta$.

By Lemmas \ref{SP} and \ref{GE}, we have
$\rho(G')\le \rho(H_s)$
with equality if and only if $G'\cong H_s$. Thus
$\rho(G)\le \rho(H_s)$
with equality if and only if $G\cong H_s$.
By the assumption, $\rho(G)\ge \rho(H(n,\delta,k))$. Thus
\[
\rho(H(n,\delta,k))\le \rho(G) \le \rho(H_s).
\]
If $s\ge \delta +1$, then we have by Lemma \ref{h1} that
$\rho(H(n,\delta,k))\le \rho(H_s)<\rho(H(n,\delta,k))$, which is a contradiction.
Thus  $s=\delta$ and $\rho(G)=\rho(H(n,\delta,k))$, so $G\cong H(n,\delta,k)$, which is also a contradiction.

The result follows by Lemma \ref{JISUAN}.
\end{proof}

If $k\ge 1$, then $\delta\ge 2$ in Theorem \ref{Cai1}, and it is easy to check that the condition on $n$
may be weakened to $n\geq 4\delta+1$.

\subsection{Signless Laplacian spectral radius}

%\begin{Lemma} \label{h4} For positive integers $s,k, n$ with $n\geq(s-k+1)(\delta-s+1)+\delta+1$, $s\geq k$.
%Let $H'_{s}=K_s\vee ((s-k+1)K_{\delta-s+1}\cup K_{n-s-(s-k+1)(\delta-s+1)})$. If $s\leq\delta-1$, then
%\[
%q(H'_{s})<q(H_{\delta}).
%\]
%\end{Lemma}
%
%\begin{proof}
%Partition $V(H'_{s})$ into $S\cup V_1\cup V_2$, where $V_1=V((s-k+1)K_{\delta-s+1})$ and $V_2=V(K_{n-s-(s-k+1)(\delta-s+1)})$.
%Assume that
%\[
%S=\{u_1,u_2,\dots,u_s\}, V_1=\{v_1,v_2,\dots,v_{(s-k+1)(\delta-s+1)}\},
%\]
%\[
%V_2=\{w_1,w_2,\dots,w_{n-s-(s-k+1)(\delta-s+1)}\}.
%\]
%Let $X$ be the Perron vector of $\mathbf{Q}(H'_s)$. By symmetry, $X$ takes the same value (say $x_1, x_2$ and $x_3$) on the vertices of $S, V_1$ and $V_2$, respectively.
%By Lemma \ref{SP}, we have $x_2\leq x_3$.
%
%Let $H'= H'_s+E_1-E_2$, where
%\[
%E_1=\{w_iv_j:1\leq i\leq\delta-s, 1\leq j\leq(s-k+1)(\delta-s+1)\},
%\]
%\[
%E_2=\{v_iv_j:1\leq i<j\leq\delta-s+1\}.
%\]
%Then $H'\cong K_{\delta}\vee((\delta-s+1)K_1\cup(s-k)K_{\delta-s+1}\cup K_{n-2\delta+s-(s-k)(\delta-s+1)-1})$.
%Then
%\begin{align*}
%&\quad X^T(\mathbf{Q}(H')-\mathbf{Q}(H'_{s}))X\\
%&= \sum_{uv\in E_1}{(x_u+x_v)^2}-\sum_{uv\in E_2}{(x_u+x_v)^2}\\
%&= \sum_{i-1}^{\delta-s} \sum_{j=1}^{(s-k+1)(\delta-s+1)}{(x_{w_i}+x_{v_j})^2}-\sum_{i=1}^{\delta-s} \sum_{j=i+1}^{\delta-s+1}{(x_{w_i}+x_{w_j})^2}\\
%&=(\delta-s)(s-k+1)(\delta-s+1)(x_2+x_3)^2-(s-k+1)(\delta-s)(2x_2)^2\\
%&>0
%\end{align*}
%Thus
%$q(H'_s)=X\mathbf{A}(H'))X<X^T(\mathbf{A}(H')X\le q(H')$.
%By Lemmas \ref{GE} and \ref{SP}, we have $q(H')\leq q(H(n,\delta,k))$. It thus follows that $\rho(H'_{s})<\rho(H_{\delta})$.
%\end{proof}

\begin{Lemma} \label{h3} For positive integers $\delta, s,k, n$ with $n\geq 9\delta-2k+12$, $\delta+1\leq s\leq\frac{n+k-2}{2}$,
\[
q(K_s\vee ((s-k+1)K_1\cup K_{n-2s+k-1}))<q(H(n,\delta,k)).
\]
\end{Lemma}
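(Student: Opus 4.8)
The plan is to follow the same template that proved Lemma~\ref{h1} for the ordinary spectral radius, now applied to the signless Laplacian. The two graphs $K_s\vee ((s-k+1)K_1\cup K_{n-2s+k-1})$ and $H(n,\delta,k)=K_\delta\vee((\delta-k+1)K_1\cup K_{n-2\delta+k-1})$ have exactly the same block structure, differing only in the parameter $s$ versus $\delta$, so both signless Laplacian spectral radii are captured by an equitable three-part partition into the out clique $K_s$, the independent set $(s-k+1)K_1$, and the inner clique. First I would write down the $3\times 3$ equitable quotient matrix for $\mathbf{Q}(K_s\vee((s-k+1)K_1\cup K_{n-2s+k-1}))$ exactly as in Lemma~\ref{JISUAN}, obtaining a cubic characteristic polynomial $g_s(x)$ whose largest root is $q(K_s\vee\cdots)$ by Lemma~\ref{equit}; specializing to $s=\delta$ recovers the polynomial $g(x)=g_\delta(x)$ of Lemma~\ref{JISUAN}.

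The heart of the argument is to compare $g_s(x)$ with $g_\delta(x)$. Because the block structure is linear in the parameter, I expect $g_s(x)-g_\delta(x)$ to be divisible by $(s-\delta)$, so I would form the quotient
\[
\tilde h(x):=\frac{g_s(x)-g_\delta(x)}{s-\delta},
\]
which will be a quadratic in $x$ with coefficients that are polynomials in $s,\delta,k,n$. The goal is to show $\tilde h(x)>0$ on the relevant half-line. In the adjacency case the threshold point was the spectral radius lower bound $\rho(H(n,\delta,k))>\rho(K_{n-\delta+k-1})=n-\delta+k-2$; the signless Laplacian analogue is to use a clean lower bound for $q(H(n,\delta,k))$ coming from a large clique, namely $q(H(n,\delta,k))>q(K_{n-\delta+k-1})=2(n-\delta+k-2)$ by Lemma~\ref{inequit} (adding any missing edge inside a connected supergraph strictly increases $q$). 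So I would evaluate $\tilde h$ at the point $x_1:=2(n-\delta+k-2)$, show that $\tilde h$ is increasing for $x\ge x_1$ (by locating its axis of symmetry well to the left of $x_1$, exploiting $s\le\frac{n+k-2}{2}$), and then reduce the positivity of $\tilde h(x_1)$ to a polynomial inequality in $\delta,k,n$.

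The final step mirrors Lemma~\ref{h1}: having shown $g_s(x)>g_\delta(x)$ for all $x\ge x_1$, and knowing $q(H(n,\delta,k))>x_1$, I would conclude $g_s(q(H(n,\delta,k)))>g_\delta(q(H(n,\delta,k)))=0$, whence the largest root of $g_s$, which is $q(K_s\vee\cdots)$, lies strictly below $q(H(n,\delta,k))$. To verify $\tilde h(x_1)>0$ I would bound $s$ by its extreme admissible values (using that the bound is monotone in $s$ on the interval, or checking the two endpoints $s=\delta+1$ and $s=\frac{n+k-2}{2}$) and then substitute the hypothesis $n\ge 9\delta-2k+12$ to force the remaining expression to be a positive combination of $\delta^2,\delta k,k^2,\delta,k$ and a positive constant, exactly as the adjacency proof ended with $4\delta^2+\frac{17}{2}\delta k+\frac{3}{2}\delta+\frac{5}{2}k^2+2k+\frac12>0$.

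I expect the main obstacle to be bookkeeping rather than ideas: the signless Laplacian quotient matrix has larger diagonal entries (each carrying the degree, hence terms of size $n+\delta$ and $2n-3\delta+2k$), so $g_s(x)$ and the difference $\tilde h(x)$ will have substantially heavier coefficients than in the adjacency case, and the threshold point doubles to $2(n-\delta+k-2)$. Consequently the worst-case substitution produces a large quadratic in $\delta,k$ after inserting $n=9\delta-2k+12$, and the delicate part is confirming that the stronger lower bound $n\ge 9\delta-2k+12$ (rather than $4\delta+3$) is precisely what is needed to keep every coefficient nonnegative. I would organize the computation by first fixing the axis-of-symmetry estimate, then handling $\tilde h(x_1)$ as a function of $s$ alone, and only at the end inserting the bound on $n$, so that each inequality is a short monotonicity or nonnegativity check rather than one monolithic expansion.
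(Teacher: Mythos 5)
Your proposal matches the paper's proof essentially step for step: the same equitable quotient matrix, the same divided difference $h(x)=\frac{f_s(x)-f_\delta(x)}{s-\delta}$, the same threshold point $2n-2\delta+2k-4=q(K_{n-\delta+k-1})$ obtained via Lemma~\ref{inequit}, the same axis-of-symmetry argument using $s\le\frac{n+k-2}{2}$, and the same final substitution of $n\ge 9\delta-2k+12$ to reduce everything to a manifestly positive polynomial in $\delta$ and $k$. The approach is correct and identical in substance to the paper's.
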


\begin{proof}  Let $H_s=K_s\vee ((s-k+1)K_1\cup K_{n-2s+k-1})$.
Partition $V(H_{s})$ into $S\cup V_1\cup V_2$, where $V_1=V((s-k+1)K_1)$ and $V_2=V(K_{n-2s+k-1})$.
It is easy to see that this partition is an equitable partition.  The corresponding quotient matrix is
 \[
\mathbf{B}=
 \begin{pmatrix}
     n+s-2 & s-k+1 & n-2s+k-1 \\
     s & s & 0 \\
     s & 0 & 2n-3s+2k-4
 \end{pmatrix}.
 \]
By a simple calculation, the characteristic polynomial of $\mathbf{B}$ is
\begin{align*}
f_s(x)&=x^3-(3n+2k-s-6)x^2\\
&\quad +(2n^2+sn+2kn-8n-4s^2+4(k-1)s-4k+8)x\\
&\quad -2s^3+(4n+4k-10)s^2\\
&\quad -(2n^2+4kn-10n+2k^2-10k+12)s.
\end{align*}
By Lemma \ref{equit},  $q(H_{s})$ is equal to the largest root of the equation $f_s(x)=0$.
Similarly,
$q(H(n,\delta,k))$ is equal to the largest root of $f_\delta(x)=0$. Then
\begin{align*}
\frac{f_s(x)-f_\delta(x)}{s-\delta}=h(x)&:=x^2+(n-4s-4\delta+4k-4)x\\
&\quad -2(\delta^2+\delta s+s^2)+(4n+4k-10)(s+\delta)\\
&\quad -2n^2-4kn+10n-2k^2+10k-12.
\end{align*}
%Next, we show that
%$g(x)>0$ for $x\in[2n-2\delta+2k-4,+\infty)$.
The symmetry axis of $h(x)$ is $x=x_0:=-\frac{1}{2}n+2s+2\delta-2k+2<-\frac{1}{2}n+4s-2k+2$.
Since $n\geq2s-k+2$, we have $2s\le n+k-2<n+\delta-2$. Thus
\begin{align*}
x_0 & <\frac{3}{2}n+2\delta-2k-2\\
    & \le 2n-\frac{1}{2}(9\delta-2k+12)+2\delta-2k-2\\
    & <2n-2\delta+2k-4.
\end{align*}
It then follows  that  $h(x)$ is strictly increasing for $x\in[2n-2\delta+2k-4,+\infty)$.
As $\delta+1\leq s\leq\frac{n+k-2}{2}$ and $n\geq 9\delta-2k+12$, we have
\begin{align*}
h(x) & \geq h(2n-2\delta+2k-4)\\
&=-2s^2-(4n-6\delta+4k-6)s+4n^2-(14\delta-14k+18)n+10\delta^2\\
&\quad -(20k-30)\delta+10k^2-30k+20\\
&\geq-2\left(\frac{n+k-2}{2}\right)^2-(4n-6\delta+4k-6)\frac{n+k-2}{2}\\
&\quad +4n^2-(14\delta-14k+18)n\\
&\quad +10\delta^2-(20k-30)\delta+10k^2-30k+20\\
&=\frac{3}{2}n^2+(12k-14\delta-9)n+\frac{25}{2}\delta^2-(22k-29)\delta+10k^2-26k+12\\
&\geq\frac{3}{2}(9\delta-2k+12)^2+(12k-14\delta-9)(9\delta-2k+12)+\frac{25}{2}\delta^2-(22k-29)\delta\\
&\quad +10k^2-26k+12\\
& =8\delta^2+60\delta k+104\delta-8k^2+64k+120\\
& >0.
\end{align*}
Thus $f_s(x)>f_\delta(x)$ for $x\in[2n-2\delta+2k-4,+\infty)$.  By Lemma \ref{inequit}, we have
$q(H(n,\delta,k))>q(K_{n-\delta+k-1})=2n-2\delta+2k-4$,
so
$f_s(q(H(n,\delta,k)))>f_\delta(q(H(n,\delta,k)))$,
implying that $q(H_s)<q(H(n,\delta,k))$.
\end{proof}

Now, we prove Theorem  \ref{Cai2}.

\begin{proof}[Proof of Theorem  \ref{Cai2}]
Suppose contradiction that $G$ is not $k$-factor-critical. By Lemma \ref{fc}, there exists a vertex subset $S\subset V(G)$ such that $o(G-S)> |S|-k$, where $|S|\geq k$. Let $|S|=s$. Since $k\equiv n\equiv s+o(G-S) \pmod 2$, one gets $o(G-S)\equiv k-s \pmod 2$. Thus  $o(G-S)\geq s-k+2$. Since $n\geq s+o(G-S)$, we have $n\geq2s-k+2$.

Let $t=s-k+2$.
Let $n_1\le \dots \le n_{t-1}$ be the orders
the odd components of $G-S$ with the first $t-1$ smallest orders, and let $n_t=n-s-n_1-\dots-n_{t-1}$.
Then $G$ is a spanning subgraph of the graph $G':=K_s\vee (K_{n_1}\cup \dots \cup K_{n_t})$.
By Lemma \ref{inequit},
$q(G)\le  q(G')$
with equality if and only if $G\cong G'$. Let
$H_s=K_s\vee((s-k+1)K_1\cup K_{n-2s+k-1})$.

Suppose first that $s\leq \delta-1$.
If $s=0$, then $k=0$ and $t=2$, so we have $q(G)\le q(G')\le q(K_{n-\delta-1})<q(H(n,\delta,k))$ by Lemma \ref{inequit}, a contradiction. Suppose that $s\ge 1$.
Then $n_1-1+s\ge \delta$, i.e., $n_1\ge \delta+1-s$.
By Lemmas  \ref{SP} and \ref{GE}, we have
$q(G)\le q(G')\le q(K_s\vee((s-k+1)K_{\delta+1-s}\cup K_{n-s-(s-k+1)(\delta+1-s)}))$
with equality if and only if $G\cong K_s\vee((s-k+1)K_{\delta+1-s}\cup K_{n-s-(s-k+1)(\delta+1-s)})$.
By Lemma \ref{h2},
\[
q(K_s\vee((s-k+1)K_{\delta+1-s}\cup K_{n-s-(s-k+1)(\delta+1-s)}))<q(H(n,\delta,k)),
\]
So $q(G)<q(H(n,\delta,k))$,
which is a contradiction.

Suppose next that $s\ge \delta$. By Lemmas \ref{SP} and \ref{GE}, we have
$q(G)\le q(G')\le q(H_s)$ with equality if and only if $G\cong H_s$.
By the assumption, $q(H(n,\delta,k))\le q(G)$. By Lemma \ref{h3}, $s=\delta$, so $G=H(n,\delta,k)$, which is a contradiction.

The result follows by Lemma \ref{JISUAN}.
\end{proof}

If $k\ge 1$, then $\delta\ge 2$ in Theorem \ref{Cai2}, and it is easy to check that the condition on $n$
may be weakened to $n\geq 9\delta-2k$.

%\vspace{5mm}
%
%\noindent {\bf Data availability}
%
%
%No data was used for the research described in the article.
%
%\vspace{5mm}
%
%\noindent {\bf Acknowledgement}
%
%%The authors thank the referees for helpful and constructive comments and suggestions.
%This work was supported by National Natural Science Foundation of China (No. 12071158).

\end{document}